\newcommand{\N}{\mathbb{N}}
\newcommand{\Z}{\mathbb{Z}}
\newcommand{\R}{\mathbb{R}}
\newcommand{\A}{\mathbb{A}}
\newcommand{\s}{\mathbb{S}}
\newcommand{\abs}[1]{\left|#1\right|}
\newcommand{\fonc}[3]{#1:#2\to#3}
\DeclareMathOperator{\rot}{Rot}
\DeclareMathOperator{\fix}{Fix}
\DeclareMathOperator{\supp}{Supp}
\DeclareMathOperator{\homeo}{Homeo}
\DeclareMathOperator{\dif}{Diff}
\newcommand{\homeou}[1]{\homeo_0\left(#1\right)}
\newcommand{\diff}[2]{\dif^{#1}_0\left(#2\right)}
\theoremstyle{theorem}
\newtheorem{leem}{Lemma}[section]
\newtheorem{coro}[leem]{Corollary}
\newtheorem{prop}[leem]{Proposition}
\newtheorem{theo}[leem]{Theorem}
\newtheorem*{theoa}{Theorem A}
\newtheorem*{theob}{Theorem B}
\theoremstyle{remark}
\newtheorem*{example}{Example}
\newtheorem{rema}{Remark}
\title{Distortion elements in group of diffeomorphisms of the 2-sphere}
\author{\textsc{Jonathan Conejeros}} 
\date{}
\begin{document}

\maketitle

\abstract{We prove that every distortion element in the group of diffeomorphisms of the 2-sphere which has some recurrent point that is not fixed is an irrational pseudo-rotation. Moreover we prove that the differential of a distortion element in the group of diffeomorphisms of the 2-sphere having at least three fixed points at a fixed point has a unique eigenvalue which is 1.}

\section{Introduction}

We recall the concept of distortion in a group. We will say that $f$ in a group $G$ is a distortion element in $G$ if it has
infinite order and there exists a finite subset $S$ of $G$ such that:
\begin{itemize}
  \item[(i)] $f$ belongs to the group generated by $S$,
  \item[(ii)]  If $\abs{f^n}$ is the word length of $f^n$ in the generators of $S$, then
  $$ \lim_{n\to\infty} \frac{\abs{f^n}}{n}=0.  $$
\end{itemize}

This limit exists because the sequence  $(\abs{f^n})_{n\in\N}$ is sub-additive. The simplest examples of groups which contain a distortion element of infinite order are the solvable Baumslag-Solitar groups:
$$BS(1; p) :=< f, g \, : \, gfg^{-1}= f^p >, \text{ where } p \geq 2. $$

Indeed, for the group relation we have for every integer $n \geq 1$: $g^nfg^{-n}= f^{p^n}$. Taking as generators the set $S = \{f, g\}$, we have
$ \abs{f^{p^n}}\leq 2n+1 $ and the element $f$ is distorted in the group $BS(1; p)$, with $p \geq2$. Another example is the discrete Heisenberg group $H_3$. It is the group of integer matrices of the form
$$\begin{pmatrix}
  1 & a & b \\
  0 & 1 & c \\
  0 & 0 & 1
\end{pmatrix}$$

There are generators $g$ and $h$ of $H_3$ such that their commutator $f=[g,h]$ has infinite order and generates the center of $H_3$. This implies that $f^{n^2}=[g^n,h^n]$ and hence that $f$ is a distortion element in $H_3$.

Before stating our main result we recall some results of distortion elements in subgroups of the group of homeomorphisms on manifolds. Given a manifold $M$, we will denote by $\diff{r}{M}$ (resp. $\homeou{M}$) the connected component of the group of $C^r$-diffeomorphisms (resp. homeomorphisms) of $M$ containing the identity. The following result due to Calegari and Freedman (see \cite{calegari-freedman}) shows that every homeomorphism of the $d$-dimensional sphere is distorted.

\begin{theo}[Calegari and Freedman, \cite{calegari-freedman}]\label{theorem every homeo distortion}
  Let $d \geq 1$. Then every element in the group $\homeou{\s^d}$ is distorted.
\end{theo}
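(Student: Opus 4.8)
The plan is to reduce the statement, via a fragmentation argument, to the case of a homeomorphism supported in an embedded ball, and then to exhibit that such a map is distorted through an ``infinite swindle'' using the self-similar geometry of $\R^{d}\cong\s^{d}\setminus\{\mathrm{pt}\}$. First I would invoke the standard fragmentation lemma for $\homeou{\s^{d}}$: covering $\s^{d}$ by two open balls $U_{1},U_{2}$ (each the complement of a point), any $f\in\homeou{\s^{d}}$ factors as $f=f_{1}f_{2}$ with $\supp(f_{i})\subset U_{i}$. Since the covering is fixed once and for all, if I can place each $f_{i}$ in a common finitely generated subgroup $G\le\homeou{\s^{d}}$ in which it is distorted, with generators not depending on $f$, then sub-additivity of word length gives $\abs{f^{n}}\le\abs{f_{1}^{n}}+\abs{f_{2}^{n}}+C$ for a bounded overhead $C$, so $f$ is distorted as well. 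Thus it suffices to treat a single ball-supported homeomorphism $g$.

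For the ball-supported case I would exploit that $\R^{d}$ is self-similar. Choose a homeomorphism $a$ of $\s^{d}$ and a ball $B\supset\supp(g)$ such that the iterates $a^{k}(B)$, $k\ge 0$, are pairwise disjoint and shrink to an attracting point $p$; the copies $g_{k}:=a^{k}ga^{-k}$ are then supported in disjoint balls accumulating only at $p$. The central point is that the infinite product $F:=\prod_{k\ge 0}g_{k}$ converges to an actual homeomorphism of $\s^{d}$, because the supports are disjoint and their diameters tend to $0$, so $F$ is continuous (and a homeomorphism) even at $p$. The shift relation $aFa^{-1}=\prod_{k\ge 1}g_{k}=g_{0}^{-1}F$ yields $g=g_{0}=[F,a]$, and since the copies have disjoint supports they commute, giving the cleaner identity $g^{n}=[F^{n},a]$ for every $n$. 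Hence $\abs{g^{n}}\le 2\abs{F^{n}}+2\abs{a}$, which reduces estimating $\abs{g^{n}}$ to estimating $\abs{F^{n}}$ for an element $F$ of the very same ``product of rescaled copies'' type.

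The last and hardest step is to turn this identity into genuinely sublinear growth. A single commutator relation only gives $\abs{g^{n}}=O(n)$, so I would set up the copies on the self-similar infinite binary tree of balls $\{B_{w}\}_{w\in\{0,1\}^{*}}$ rather than on a ray, together with the two homeomorphisms $\lambda_{0},\lambda_{1}$ realizing the identifications $B_{w}\mapsto B_{0w}$ and $B_{w}\mapsto B_{1w}$ of the whole tree onto its two subtrees. Writing $F$ for the product of copies of $g$ over all nodes, the commuting factorization $F=g_{\emptyset}\,(\lambda_{0}F\lambda_{0}^{-1})(\lambda_{1}F\lambda_{1}^{-1})$ lets one express $g^{n}$ using the binary expansion of $n$, so that the conjugators appearing are bounded powers of $\lambda_{0},\lambda_{1}$ and the number of letters is $O(\log n)$; this is what forces $\abs{g^{n}}/n\to 0$, exactly as the Baumslag--Solitar and Heisenberg examples in the introduction produce distortion from a multiplicative relation. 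I expect the main obstacle to lie precisely here: arranging the tree of balls and the maps $\lambda_{0},\lambda_{1}$ so that the self-similar encoding is uniform in $g$, that all the auxiliary homeomorphisms lie in one finitely generated group with generators independent of $f$, and that every infinite product involved is verified to be a bona fide homeomorphism of $\s^{d}$ (continuity at the accumulation set of the shrinking balls being the delicate point).
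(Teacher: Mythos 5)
The statement is quoted from Calegari--Freedman and the paper offers no proof of it (it is cited as Theorem 1.1 of \cite{calegari-freedman}), so your attempt can only be judged against the original argument, whose spirit (fragmentation into two ball-supported pieces plus an infinite swindle, legitimate in $C^0$ because infinite products over disjoint shrinking balls converge to honest homeomorphisms) you do correctly capture. However, there are two genuine gaps. First, your reduction step is false as stated: from $f=f_1f_2$ you claim $\abs{f^n}\leq\abs{f_1^n}+\abs{f_2^n}+C$, but $(f_1f_2)^n\neq f_1^n f_2^n$ in a nonabelian group, so distortion of each fragment does not combine into distortion of $f$. The correct move, and the one in \cite{calegari-freedman}, is to fragment \emph{each power} separately, $f^{n_i}=g_i h_i$ with $g_i,h_i$ supported in the two balls, and then encode the whole sequences $(g_i)_i$ and $(h_i)_i$ into single infinite-product homeomorphisms $U=\prod_{i\geq 0}a^iu_ia^{-i}$ (with $u_i$ built from $g_i$ by a swindle inside a fixed ball), from which $g_i$ is recovered as a commutator word of length $O(i)$ in a fixed finite set, e.g. $g_i=[a^{-i}Ua^i,V]$, the point being that all components of $a^{-i}Ua^i$ other than $u_i$ have support disjoint from that of $V$. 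Taking $n_i=2^i$ gives $\abs{f^{n_i}}=O(\log n_i)$, and subadditivity of $n\mapsto\abs{f^n}$ (Fekete) upgrades sublinearity along this subsequence to $\lim\abs{f^n}/n=0$.

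Second, your binary-tree scheme with one copy of $g$ at each node cannot deliver $\abs{g^n}=O(\log n)$: there is no identity behind the claimed ``binary expansion'' step. Baumslag--Solitar-type distortion requires a relation such as $aga^{-1}=g^2$ that \emph{multiplies} exponents, which would force $g$ to be conjugate to $g^2$ --- unavailable for an arbitrary $g$. With your $F$, every letter $F^{\pm1}$ (however conjugated by words in $\lambda_0,\lambda_1$) deposits exponent $\pm1$ in any given ball, since the copies within one conjugate of $F$ sit in pairwise disjoint balls; so a word of length $L$ in $\{F,\lambda_0,\lambda_1\}$ can only produce root exponents of size at most $L$, i.e. linear growth, exactly the obstruction you were trying to evade. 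Relatedly, your insistence that the generators be independent of $f$ --- which is what drives you toward the uniform tree construction --- is not required by the definition of distortion (the finite set $S$ may depend on $f$) and is not achieved by Calegari--Freedman either: their generators encode the prescribed powers $f^{n_i}$ themselves, which is permissible precisely because in $\homeou{\s^d}$, unlike in $\diff{1}{\s^d}$, no derivative control constrains the infinite products.
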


 In this paper we will consider the group of diffeomorphisms of the 2-sphere. As it is cleverly noticed in \cite{calegari-freedman}, we have the following restrictions for a distortion element $f$ in $\diff{1}{M}$.

\begin{itemize}
  \item[($P_1$)] If $x$ is a fixed point of $f$, then eigenvalues of $Df(x)$ have absolue value $1$, and
  \item[($P_2$)] $f$ has zero topological entropy
\end{itemize}
The authors of \cite{calegari-freedman} use Oseledec's theorem and Pesin-Ruelle inequality to proving property 2. We note that recently Navas gives a most elementary proof of property 2 which is valid for bi-Lipschitz homeomorphisms (see \cite{navas}). The following result improves the above second property $(P_2)$.

\begin{theo}[Franks and Handel, \cite{franks-handel}]\label{theorem franks handel} Let $M$ be a closed oriented surface of genus at least two. Then there are no distortion element in $\diff{}{M,\mu}$ the subgroup of the group $\diff{1}{M}$ which preserve a Borel probability measure on $M$ with total support.
\end{theo}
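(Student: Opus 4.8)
The plan is to show that the constraints on a distortion element are incompatible with the hyperbolic geometry of a genus $\ge 2$ surface, organizing everything around the principle that distortion elements are invisible to quasi-additive invariants. First I would record two vanishing lemmas. If $\phi\colon G\to A$ is a homomorphism into a torsion-free abelian group and $f\in G$ is distorted, then $\phi(f)=0$: the word length of $\phi(f^n)=n\phi(f)$ is at most $\abs{f^n}=o(n)$, which in a finitely generated abelian group forces $\phi(f)$ to be torsion, hence $0$. If $\Phi\colon G\to\R$ is a homogeneous quasi-morphism with defect $D$, then $\Phi(f)=0$, since $\abs{n\Phi(f)}=\abs{\Phi(f^n)}\le D\,\abs{f^n}+D$ and $\abs{f^n}/n\to 0$. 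Beyond property $(P_2)$, these are the only two facts about distortion I will use.

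Next I would exploit that, since $M$ has genus $\ge 2$, the identity component is contractible (Earle--Eells), so every $f\in\diff{}{M,\mu}$ admits a canonical isotopy to the identity and a canonical equivariant lift $\tilde f$ to the universal cover $\mathbb{H}^2$ commuting with $\Gamma=\pi_1(M)$. Writing $f^n$ as a word of length $\ell_n=o(n)$ in a fixed finite generating set, the equivariant lifts of the generators have bounded displacement, so $d_{\mathbb{H}^2}(\tilde x,\tilde f^n\tilde x)\le C\ell_n=o(n)$ for every $\tilde x$: distortion forces sublinear drift. Averaging the displacement cycle of the canonical isotopy against $\mu$ produces the mean rotation vector $\rho(f)\in H_1(M;\R)$; invariance of $\mu$ makes $\rho$ a homomorphism (Fathi's mass-flow homomorphism), so the first lemma gives $\rho(f)=0$.

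The heart of the proof is to extract from the dynamics a nonzero homogeneous quasi-morphism, contradicting the second lemma, and here genus $\ge 2$ is essential. The surface group $\Gamma$ carries an infinite-dimensional space $Q(\Gamma)$ of homogeneous quasi-morphisms (for instance Brooks counting quasi-morphisms), and the Gambaudo--Ghys construction promotes each $\psi\in Q(\Gamma)$ to a homogeneous quasi-morphism $\Phi_\psi$ on $\diff{}{M,\mu}$ by integrating $\psi$ over the loops formed from $\mu$-generic orbit segments of the canonical isotopy closed up by a geodesic return in $\mathbb{H}^2$. I would then argue that not all $\Phi_\psi$ can vanish on an infinite-order $f$: since $\mu$ has full support, almost every point is recurrent, and the vanishing of $\rho(f)$ together with the zero-entropy structure theory (Thurston--Nielsen for maps isotopic to the identity) rules out the degenerate cases, so the $\mu$-generic orbit has nontrivial asymptotic linking detected by some $\psi$, whence $\Phi_\psi(f)\ne 0$.

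The hard part will be exactly this last step: controlling the Gambaudo--Ghys integrals and proving that zero rotation vector plus infinite order cannot force every $\Phi_\psi$ to vanish. The delicate point is to exclude the possibility that the generic-orbit linking is trivial, which requires combining the recurrence supplied by $\mu$, the zero-entropy constraint $(P_2)$, and the exponential divergence of geodesics in $\mathbb{H}^2$. The genus-$0$ and genus-$1$ cases are genuinely different precisely because $Q(\Gamma)$ is then trivial or too small to register the dynamics, so any correct argument must locate its contradiction in the abundance of quasi-morphisms available only for genus $\ge 2$.
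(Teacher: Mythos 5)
This statement is quoted by the paper from Franks and Handel \cite{franks-handel} and is not proved in the text, so your proposal must be measured against the source's argument. Your two vanishing lemmas are correct and do appear in the real proof: distortion kills the mass-flow/rotation-vector homomorphism (so $\rho(f)=0$, as in Franks--Handel), and homogeneous quasi-morphisms vanish on distortion elements. But the heart of your argument --- that for every infinite-order $f\in\diff{}{M,\mu}$ with $\rho(f)=0$ some Gambaudo--Ghys quasi-morphism $\Phi_\psi$ is nonzero --- is precisely where the entire difficulty of the theorem sits, and you offer no mechanism for it. The sketch ``recurrence $+$ zero entropy $+$ exponential divergence of geodesics'' does not produce a nonvanishing $\Phi_\psi$: Thurston--Nielsen theory classifies isotopy classes and is vacuous for maps isotopic to the identity (which all elements of $\diff{}{M,\mu}$ are, by the paper's definition of $\diff{1}{M}$ as the identity component), so it cannot ``rule out the degenerate cases''; and no result is known asserting that the Gambaudo--Ghys quasi-morphisms detect every infinite-order element of this group. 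As written, you have reduced the theorem to an unproven claim at least as hard as the theorem itself.

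What Franks and Handel actually do is closely related in spirit but concretely different at the crucial step. Their quasi-additive invariant is the \emph{spread} $L_{\beta,\gamma}(\alpha)$ --- the very quantity reproduced in Section 3 of this paper --- and the proof has two halves: (i) the displacement lemma $L_{\beta,\gamma}(f^n(\alpha))\leq L_{\beta,\gamma}(\alpha)+C\abs{f^n}$, so a distortion element has $\sigma_{f,\beta,\gamma}(\alpha)=0$ for all curves (this is Proposition \ref{distortion imply spread 0} above); and (ii) a \emph{positive spread theorem}: a nontrivial diffeomorphism isotopic to the identity of a hyperbolic surface, preserving a Borel probability measure with total support and with zero mean rotation vector, has $\sigma_{f,\beta,\gamma}(\alpha)>0$ for suitably chosen $\alpha$, $\beta$, $\gamma$. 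Part (ii) is where genus $\geq 2$, Poincar\'e recurrence from the full-support hypothesis, and hyperbolic geodesic-tightening arguments are actually deployed; it is the concrete substitute for your missing detection step. So your architecture --- kill the homomorphism, then exhibit a quasi-additive invariant with linear growth --- matches the shape of the known proof, but your proposal stops short exactly at the point where the work lies, and the quasi-morphism route you suggest is not known to close that gap.
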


Also we can find infinitely many distortion elements in $\diff{1}{M}$ for any manifold $M$. More precisely we can see the solvable Baumslag-Solitar
group (defined above) $BS(1; p)$, with $p \geq 2$ as a subgroup of $\diff{1}{M}$.

\begin{example}[Solvable Baumslag-Solitar group embeds in $\diff{1}{M}$]
Choose a ball B in a manifold M. Let us consider a $C^1$-action of $BS(1; p)$ on $[0,1]$ such that the generators $f$, $g$ satisfy $f'(0)=g'(0)=1$ and $f'(1)=g'(1)=1$  (see \cite{bo-mon-navas-rivas}). Extend it to an action on $B$ which fixes an interior point and the boundary $\partial B$ so that it can be extend as the identity outside $B$.
\end{example}

This example being an action of the line has no recurrence. Militon gives another class of distorted element in $\diff{\infty}{M}$. He uses Avila's techniques (see \cite{avila}) and a local perfection result (see \cite{haller}), he obtains that every recurrent element in $\diff{\infty}{M}$, i.e. for which arbitrarily large iterates are arbitrarily close to the identity in the $C^{\infty}$-topology, is distorted.

\begin{theo}[Militon,\cite{militon}]
Let $M$ be a compact manifold. Then every recurrent element in the group $\diff{\infty}{M}$ is distorted.
\end{theo}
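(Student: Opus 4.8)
The plan is to reduce the problem to a single quantitative statement — that a sufficiently fast-converging sequence in $\diff{\infty}{M}$ has sublinear word length — and then to manufacture such a sequence from the recurrence hypothesis. First I would record the elementary but decisive observation that, by subadditivity of $n\mapsto\abs{f^n}$ together with Fekete's lemma, the limit $\lim_n \abs{f^n}/n$ equals $\inf_n \abs{f^n}/n$; hence it is enough to produce a single finite generating set $S$ and a single subsequence $n_k\to\infty$ with $\abs{f^{n_k}}/n_k\to 0$. Since $f$ is recurrent, for each $k$ I may choose a return time $n_k$ with $f^{n_k}$ inside a prescribed $C^\infty$-neighborhood of the identity and, at the same time, with $n_k$ larger than any threshold $N_k$ I wish: the neighborhood and the lower bound $n_k>N_k$ are imposed simultaneously when invoking recurrence. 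Taking $N_k=k\,2^k$ forces $k=o(n_k)$. Thus the whole burden is to express $g_k:=f^{n_k}$, a sequence converging to the identity at an arbitrarily fast prescribed rate, as a word of length $O(k)$ in a fixed finite set $S$; combined with $k=o(n_k)$ this already gives $\abs{f^{n_k}}/n_k\to 0$.

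Next I would localize. Using the local perfection result with estimates, each $g_k$ — once close enough to the identity — factors as a product of a bounded number (independent of $k$) of diffeomorphisms, each supported in a fixed coordinate domain of $M$, the factors tending to the identity in $C^\infty$ as fast as $g_k$ does. This reduces matters to finitely many sequences of compactly supported diffeomorphisms, each still converging super-fast to the identity, and it suffices to treat one such sequence, which I continue to denote $(g_k)$, supported in a fundamental domain $D_0$ of a map $\phi$ to be chosen below.

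The engine is Avila's rescaling trick. I would fix a diffeomorphism $\phi$, supported near $D_0$, with an attracting fixed point $0$ at which it is tangent to the identity, so that the images $D_k:=\phi^k(D_0)$ are pairwise disjoint and shrink to $0$; tangency to the identity (parabolic rather than hyperbolic behaviour) is what keeps conjugation by $\phi^k$ from destroying $C^\infty$ control. The key device is to encode the entire sequence into one auxiliary diffeomorphism $v$: on $D_k$ I set $v:=\phi^k g_k\phi^{-k}$, the copy of $g_k$ pushed to scale $k$, and $v=\mathrm{id}$ elsewhere. Then $\phi^{-k}v\phi^{k}=g_k$ exactly on $D_0$, so with $S=\{\phi,v,\dots\}$ (finitely many maps, one $v$ per localization slot) one gets $\abs{g_k}_S\le 2k+O(1)$, the desired $O(k)$ bound.

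The main obstacle is to verify that $v$ is a genuine element of $\diff{\infty}{M}$, i.e.\ that the infinite stack of rescaled copies glues to a map that is $C^\infty$ even at the accumulation point $0$. This is exactly where "converging sufficiently fast" is used: one must show that the $C^m$-size of the $k$-th piece, namely $\norma{g_k-\mathrm{id}}_{C^m}$ amplified by the derivative distortion of $\phi^{\pm k}$ (which at each fixed order grows only polynomially in $k$, precisely because $\phi$ is parabolic), still tends to $0$ in every $C^m$ norm. Balancing the prescribed super-fast decay of $\norma{g_k-\mathrm{id}}_{C^m}$ against these distortion factors, with the required uniformity in $m$, is the heart of Avila's estimates and the delicate part of the argument. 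Granting it, the construction closes: $f^{n_k}$ has word length $O(k)=o(n_k)$, and Fekete's lemma then yields that $f$ is a distortion element.
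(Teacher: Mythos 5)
Your overall skeleton is exactly the route the paper attributes to Militon: reduce via subadditivity (Fekete) to bounding $\abs{f^{n_k}}$ along one subsequence, use recurrence to pick return times $n_k>k2^k$ with $f^{n_k}$ in a prescribed $C^\infty$-neighborhood of the identity, fragment via the local perfection result with estimates of Haller et al., and encode the resulting fast-converging sequence into finitely many generators by Avila's rescaling trick. (One small point: you should put $f$ itself into $S$, so that $\abs{f^n}$ is finite for \emph{all} $n$ and the Fekete reduction applies.) However, there is a genuine gap at the central step, the extraction of $g_k$ from the tower map $v$. Conjugation is a global automorphism: the element $\phi^{-k}v\phi^{k}$ is conjugate to $v$, so it agrees with $g_k$ on $D_0$ but carries the rescaled copy of every other $g_j$ onto the set $\phi^{j-k}(D_0)$, $j\neq k$. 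It is therefore \emph{not} equal to $g_k$ as an element of $\diff{\infty}{M}$, and nothing in your argument shows that $g_k$ belongs to the group generated by $S=\{\phi,v,\dots\}$ at all, let alone with $\abs{g_k}_S\leq 2k+O(1)$. "Equality on $D_0$" does not give equality in the group, and this is precisely the claim your word-length bound rests on.

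Repairing this requires an additional cancellation mechanism, which is the actual content of Avila's lemma (generalized by Militon). For instance, one also encodes the \emph{shifted} sequence in a second generator $w$ whose block on $\phi^{j}(D_0)$ is $\phi^{j}g_{j+1}\phi^{-j}$; then a direct computation with disjoint supports gives
$$ v\,\phi\, w^{-1}\phi^{-1}=\phi\, g_1\, \phi^{-1}, $$
since all blocks of index $\geq 2$ cancel, and a bookkeeping of this kind (telescoping through the shifts, with care that word lengths stay linear in $k$ rather than compounding) is what produces genuine words representing each $g_k$. This combinatorial peeling, together with the $C^\infty$-gluing estimates at the accumulation point that you did identify, is the delicate heart of the proof; as written, your single-generator tower skips it, and the bound $\abs{f^{n_k}}=O(k)$ --- hence the whole conclusion --- is unjustified.
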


For instance, irrational rotations of the 2-sphere are distorted (previously showed by Calegari and Freedman (see \cite{calegari-freedman})). More generally, using the Anosov-Katok method (see \cite{herman}  and \cite{fathi-herman}), we can build recurrent elements in the case of the 2-sphere which are not conjugate to a rotation (such an element are called irrational pseudo-rotation). We start by introducing some definitions. Let $f$ be a homeomorphism of $\s^2$. We say that $z\in \s^2$ is a (positively) recurrent point that is not fixed for $f$ if there are arbitrarily large (positively) iterates $f^n(z)$ which are arbitrarily close to $z$ and $f(z)\neq  z$. We say that a homeomorphism $f$ of $\s^2$ is an irrational pseudo-rotation if:
\begin{itemize}
  \item[(i)] $f$ has exactly two periodic points, $z_0$ and $z_1$, which are both fixed;
  \item[(ii)] there exists a point $z$ whose $\alpha$-limit set or $\omega$-limit set is not included in $\{z_0, z_1\}$; and
  \item[(iii)] if $\check{f}$ is a lift of $f|_{\s^2\setminus\{z_0,z_1\}}$ to the universal covering space of $\s^2\setminus\{z_0,z_1\}$, its unique rotation number is an irrational number.
\end{itemize}

Our main result is the following.

\begin{theoa}
  Let $f$ be a distortion element in $\diff{1}{\s^2}$ such that has a (positively) recurrent point that is not fixed. Then $f$ is an irrational pseudo-rotation.
\end{theoa}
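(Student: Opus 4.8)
The plan is to extract all available structure from the two dynamical constraints that distortion forces on $f$, namely zero topological entropy $(P_2)$ and the eigenvalue condition $(P_1)$, and then to let the hypothesis of a non-fixed recurrent point interact with them. First I would record that $f\in\diff{1}{\s^2}$ lies in the identity component, hence is orientation preserving and isotopic to the identity; since the Lefschetz number of such an $f$ equals $\chi(\s^2)=2$, the set $\fix(f)$ is nonempty. Write $z$ for the given point with $f^{n_k}(z)\to z$ along some $n_k\to\infty$ and $f(z)\neq z$, so that $z\in\omega(z)$ while $z\notin\fix(f)$.

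The core of the argument is to verify conditions (i)--(iii) of the definition of an irrational pseudo-rotation. To do this I would fix a maximal isotopy from the identity to $f$ and invoke Le Calvez's equivariant foliation theorem to obtain a singular transverse foliation $\mathcal{F}$ whose singular set is contained in $\fix(f)$. The recurrent point $z$ then carries a transverse trajectory that accumulates on itself. The key rigidity tool is the Le Calvez--Tal forcing theory: if this transverse trajectory had a transverse self-intersection, or crossed the trajectory of a periodic point rotating at a different speed, then $f$ would have positive topological entropy, contradicting $(P_2)$. Thus zero entropy compels all recurrent and periodic motion to rotate at one common, well-defined speed, and it is this collapse of the rotation set to a single point that underlies both the fixed-point count and the uniqueness required in (iii).

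Concretely the steps are: (1) combine the forcing theory with $(P_2)$ to exclude periodic points of period greater than one and to exclude three or more fixed points, giving $\per(f)=\fix(f)=\{z_0,z_1\}$ and hence (i); (2) note that $\s^2\setminus\{z_0,z_1\}$ is then an open annulus on which $f$ acts, pass to a lift $\check f$ to its universal cover, and use step~(1) together with the forcing theory to show the rotation set of $\check f$ is a single number $\rho$, so the rotation number is unique; (3) verify (ii) directly, since $z\in\omega(z)$ and $z\notin\{z_0,z_1\}$ force $\omega(z)\not\subseteq\{z_0,z_1\}$; and (4) prove $\rho$ is irrational, for if $\rho$ were rational then Franks' recurrence version of the Poincar\'e--Birkhoff theorem, applied using the recurrent point $z$ in the interior of the annulus, would produce a periodic point distinct from $z_0,z_1$, contradicting step~(1). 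Together these give (iii) and complete the identification of $f$ as an irrational pseudo-rotation.

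The step I expect to be the main obstacle is (1): turning the qualitative statement ``zero entropy'' into the sharp count ``exactly two fixed points and no higher-period orbits.'' The recurrent point is indispensable here, since it is what allows the forcing theory to engage, but the delicate part will be the local and global bookkeeping around $\fix(f)$. Near each fixed point, $(P_1)$ places the eigenvalues of $Df$ on the unit circle, so the local $\mathcal{F}$-index should behave like that of a rotation; globally one must sum these contributions on $\s^2$ and reconcile them with the transverse trajectory of $z$ to rule out any extra singularity of $\mathcal{F}$. Controlling this index computation simultaneously with the self-accumulating transverse path of $z$ is where the most careful topological work will be required.
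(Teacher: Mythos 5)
There is a genuine gap, and it sits exactly where you predicted: steps (1) and (2). Your plan uses distortion only through its two soft consequences $(P_1)$ (eigenvalues of modulus one at fixed points) and $(P_2)$ (zero topological entropy), but the theorem is false under those two hypotheses alone, so no amount of foliation and index bookkeeping can close step (1). Concretely, take a smooth integrable twist of $\s^2$: rotate each latitude circle by an angle varying smoothly with the latitude, with the angle sweeping an interval of $(0,1/2)$ that contains rationals. This diffeomorphism has zero topological entropy, its differential at the two polar fixed points is a rotation so $(P_1)$ holds, every point is recurrent and most are not fixed, yet it has circles of periodic points of arbitrarily high period and a nondegenerate rotation interval; it is not an irrational pseudo-rotation (and, consistently, it is not distorted in $\diff{1}{\s^2}$). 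This refutes your step (1) ($\per(f)=\fix(f)$ cannot follow from $(P_2)$ plus forcing) and equally your step (2): the no-horseshoe results of Le Calvez--Tal (their Theorem A, quoted in the paper) only give that each point with nonempty $\omega$-limit set has a \emph{well-defined} rotation number, not that these numbers coincide across points.

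The paper closes this gap by using distortion quantitatively, twice, in ways your proposal never invokes. First, through the Franks--Handel spread: for any finite generating set there is $C>0$ with $L_{\beta,\gamma}(f^n(\alpha))\leq L_{\beta,\gamma}(\alpha)+C\abs{f^n}$, so sublinear word growth forces zero spread, and hence (Proposition \ref{main proposition}, with Le Roux's and Conejeros' local rotation set results handling the blown-up boundary circles) a \emph{unique} rotation number on the annulus obtained by blowing up any two fixed points. Second, through Franks--Handel's Theorem 1.3 (every invariant measure of a distortion element whose iterate has three fixed points is supported on those fixed points), which yields $\per(f)=\fix(f)$ and excludes horseshoes (Corollary \ref{corollary franks handel theorem}). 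With these inputs the paper argues: if $f$ had three or more fixed points, a lift fixing a lift of a third fixed point pins the unique rotation number to $0$, and the recurrent point inside the maximal fixed-point-free annulus given by Theorem \ref{theorem G lecalves tal} then contradicts Theorem \ref{theorem F lecalves tal} (a recurrent point of rotation number zero for a fixed point free lift forces a horseshoe); with exactly two fixed points, the same mechanism rules out a rational value of the unique rotation number, which is the rigorous version of your step (4). Your steps (3) and (4) are sound in spirit, but they rest on (1) and (2), which cannot be established from $(P_1)$ and $(P_2)$ alone.
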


Clearly, by Theorem \ref{theorem every homeo distortion} the hypothesis of differentiability is necessary. Moreover the solvable Baumslag-Solitar group embeds in $\diff{1}{\s^2}$ (above example) and Heisenberg group $H_3$ acts smoothly on $\s^2$ by restrinting and projectivizing the standard action of $GL(3,\R)$ on $\R^3$. These examples show that hypothesis of the existence of recurrence in Theorem A is also necessary.

What about above property 1 $(P_1)$ ? A priori this property can be improve with an additional very strong hypothesis. If a distortion element $f$ belongs to a finitely generated group which acts with a global fixed point $z$. Then the differential at $z$ induces an action of the group on $GL(2,\R)$, and as it is noticed in \cite{calegari-freedman} the eigenvalues of $Df(z)$ are roots of unity.

The property ``eigenvalues are roots of unity'' is again true in some cases for our examples of groups having distortion elements without any additional hypothesis.

Case of solvable Baumslag-Solitar group. Here Guelman and Liousse showed the existence of a $g$-minimal set $\Lambda$ contained in $\fix(f)$ (see \cite{guelman-liousse 1}). Using the fact that every point in $\Lambda$ is recurrent, the authors of \cite{guelman-liousse 1} proved, in \cite{guelman-liousse 2}, that for every $z\in \Lambda$ the eigenvalues of $Df(z)$ are roots of unity.

Case of Heisenberg group. If $H_3$ acts on $\s^2$ by diffeomorphisms, then Rib\'on insures the existence of an orbit of cardinality at most $2$ (see \cite{ribon}). Considering power of the generating elements (if necesary) we can suppose that $H_3$ acts with a global fixed point $z$. Then Franks and Handel (see \cite{franks-handel}) proved that $Df(z)=\pm Id$.

Our second result shows that for distortion elements in $\diff{1}{\s^2}$ any additional hypothesis is necessary.

\begin{theob}
  Let $f$ be a distortion element in $\diff{1}{\s^2}$ which has at least three fixed points. If $z$ is a fixed point of $f$, then $Df(z)$ has a unique eigenvalue which is 1.
\end{theob}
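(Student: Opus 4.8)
The plan is to reduce the statement to excluding any nontrivial rotation in the differential, and then to extract from such a rotation a contradiction with the hypotheses available, namely property $(P_1)$, property $(P_2)$, and Theorem A.

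First I would fix a fixed point $z$ and analyze the possibilities for $Df(z)$. Since $f$ lies in $\diff{1}{\s^2}$, the identity component, it is orientation preserving, so $\det Df(z)=1$. Together with $(P_1)$, which forces both eigenvalues onto the unit circle, this means the eigenvalues form a conjugate pair $e^{\pm i\theta}$ with $\theta\in[0,\pi]$; in particular the orientation-reversing configuration $\{1,-1\}$ is automatically excluded. The conclusion ``unique eigenvalue $1$'' is exactly the case $\theta=0$ (eigenvalues $1,1$, possibly with a nontrivial unipotent Jordan block, which is compatible with distortion since $\|Df(z)^n\|$ then grows only linearly). Thus the theorem is equivalent to ruling out $\theta\in(0,\pi]$. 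I would also record the following consequence of Theorem A: since $f$ has at least three fixed points it is not an irrational pseudo-rotation (which has exactly two periodic points), so by the contrapositive of Theorem A the map $f$ has no (positively) recurrent point that is not fixed. Call this property $(\ast)$.

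Next I would set up the local rotation at $z$ in the bad cases. For $\theta\in(0,\pi]$ one has $\det(Df(z)-I)=2-2\cos\theta>0$, so $z$ is an isolated fixed point with Lefschetz index $+1$, and $Df(z)$ rotates tangent directions by $\theta$; this yields a well-defined local rotation number $\rho=\theta/2\pi\in(0,1/2]$. To globalize it I would use that $f$ is isotopic to the identity and invoke Le Calvez's transverse foliation theory: there is a singular foliation transverse to an identity isotopy of $f$ whose singularities include $\fix(f)$, and the nonvanishing of $\rho$ forces the leaves to wind around $z$, producing transverse loops encircling $z$. The forcing theory of Le Calvez and Tal then converts this winding into dynamical consequences: a nonzero local rotation number at $z$, in the presence of the other fixed points, yields either a non-fixed periodic orbit (which is in particular a recurrent point that is not fixed, contradicting $(\ast)$) or a topological horseshoe, hence positive topological entropy, contradicting $(P_2)$.

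The main obstacle is the remaining scenario in which $z$ is a weak attractor or repeller with irrational rotation: there nearby orbits spiral monotonically toward $z$, so there are neither non-fixed periodic points nor recurrent orbits near $z$ to feed into the previous step, yet $\theta\neq0$. To handle it I would pass to the prime ends compactification of the connected component $U$ of the basin of $z$ (an $f$-invariant open simply connected set) and study the induced circle homeomorphism $\hat f$. The prime ends rotation number of $\hat f$ should equal the local rotation number $\theta/2\pi\neq 0$. The crux is then to show this prime-ends rotation number must vanish for a distortion element --- the delicate point being that the generators of $S$ need not preserve $U$ nor fix $z$, so the number cannot be killed by a naive homogeneous-quasimorphism argument. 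I expect to resolve this by combining the subexponential control of $Df^n$ coming from distortion with a Koropecki--Le Calvez--Nassiri type analysis of the prime-ends dynamics, which produces recurrent prime ends and hence, back in $\s^2$, a non-fixed recurrent point --- again contradicting $(\ast)$. Once every value $\theta\in(0,\pi]$ is excluded we get $\theta=0$, so both eigenvalues equal $1$ and $Df(z)$ has $1$ as its unique eigenvalue.
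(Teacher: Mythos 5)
There is a genuine gap, and it sits exactly where you flag it. Your argument uses distortion only through $(P_1)$, $(P_2)$ and Theorem A (your property $(\ast)$), and these three facts are \emph{not sufficient} to exclude $\theta\neq 0$: consider the time-one map of a gradient-like flow on $\s^2$ with an attracting focus at $z$ modelled locally on $w\mapsto e^{i\theta}w(1-\abs{w}^2)$, completed so as to have three or more fixed points and no saddle connections creating recurrence. Such a map has eigenvalues of modulus one at every fixed point, zero entropy, and no non-fixed recurrent point, yet $Df(z)$ is a nontrivial rotation. So any correct proof must extract more from distortion than $(P_1)$, $(P_2)$, $(\ast)$, and this is precisely what the paper does: Proposition \ref{distortion imply spread 0} (the Franks--Handel spread of a distortion element vanishes, a word-length estimate with no analogue in your outline) drives Proposition \ref{main proposition}, which says that the lift of the annulus map obtained by blowing up $z_0$ and $z$ has a \emph{unique} rotation number. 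Taking the lift that fixes a lift of the third fixed point forces that number to be $0$; hence the boundary circle over $z$, on which $f_{ann}$ acts by $v\mapsto Df(z)v/\norma{Df(z)v}$, has rotation number $0$, giving a fixed direction, i.e.\ a positive real eigenvalue, which by $(P_1)$ equals $1$, and orientation-preservation then forces the second eigenvalue to be $1$ as well.

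Two specific steps of your proposal would fail. First, the forcing step: a nonzero local rotation number at $z$, even in the presence of other fixed points, does not by itself yield a non-fixed periodic orbit or a horseshoe; Theorem \ref{theorem F lecalves tal} (Le Calvez--Tal) needs a \emph{positively recurrent} point with rotation number $0$, and your property $(\ast)$ has already removed all non-fixed recurrence, so there is nothing for the forcing theory to bite on (the spiral-attractor example above has two distinct rotation numbers in the blown-up annulus and no horseshoe --- which is consistent, because that map is not distorted). Second, in your ``crux'' case the proposed mechanism is invalid: \emph{every} orientation-preserving circle homeomorphism has recurrent points, so recurrent prime ends always exist, and prime-end recurrence does not descend to recurrence of points of $\s^2$ --- a spiraling attractor with nonzero prime-ends rotation number and no recurrent dynamics in the sphere is exactly the situation at hand, so your implication would prove too much. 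You present this step only as an expectation, and rightly so; closing it requires the paper's spread/unique-rotation-number mechanism (or some other genuinely distortion-based estimate), not a combination of $(P_1)$, $(P_2)$ and Theorem A. (A minor point in your reduction: $\det Df(z)=1$ follows from orientation-preservation \emph{together with} $(P_1)$, not from orientation alone; the case analysis $e^{\pm i\theta}$, $\theta\in[0,\pi]$, is nevertheless correct.)
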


These results are part of a bigger plan which seeks to characterize dynamics of distortion elements of the $2$-sphere.

We will proved Theorem A using the new criterion for the existence of topological horseshoes for surface homeomorphisms which are isotopic to the identity recently find for Le Calvez and Tal (see \cite{lecalvez-tal}).

\subsection*{Acknowledgements}

The preparation of this article was funded by the FONDECYT. I would like to thanks Patrice Le Calvez and Fr\'ed\'eric Le Roux for suggesting some questions which motivated Theorem A. I would also like thanks Andr\'es Navas for some useful comments and remarks.

\section{Preliminary results}

In this section we recall some results of distortion elements in the group of diffeomorphisms of the $2$-sphere and homeomorphisms of the $2$-sphere with no topological horseshoes that we will use in the rest of the article.

\subsection{Distortion elements in the group of diffeomorphisms of the $2$-sphere}

The following result is Theorem \ref{theorem franks handel} for the case of the $2$-sphere.

\begin{theo}[Theorem 1.3, \cite{franks-handel}]
Let $f$ be a distortion element in $\diff{1}{\s^2}$ and let $\mu$ be an $f$-invariant Borel probability measure. If $f^n$ has at least three fixed points for some smaller integer $n$, then $\supp(\mu) \subset \fix(f^n)$.
\end{theo}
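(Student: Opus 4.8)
The plan is to reduce to an ergodic situation and then derive a contradiction from the coexistence of three fixed points, a non-fixed recurrent orbit, and the vanishing of rotation-type quantities forced by distortion. First I would observe that $g:=f^n$ is again a distortion element in $\diff{1}{\s^2}$: with the same finite generating set $S$ one has $\abs{g^k}=\abs{f^{nk}}$, so $\abs{g^k}/k = n\bigl(\abs{f^{nk}}/(nk)\bigr)\to 0$, and $g$ has infinite order. Since $\mu$ is $g$-invariant, it suffices to prove $\supp(\mu)\subseteq\fix(g)$. By the ergodic decomposition I may assume $\mu$ is ergodic. The set $\fix(g)$ is closed and $g$-invariant, so ergodicity gives $\mu(\fix(g))\in\{0,1\}$; if this measure is $1$ we are done, so assume for contradiction that $\mu(\fix(g))=0$. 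Then $\mu$-almost every point lies outside $\fix(g)$, and by the Poincar\'e recurrence theorem $\mu$-almost every point is recurrent; in particular there is a recurrent point $z$ that is not fixed by $g$.

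Second, I would exploit the three fixed points $p_1,p_2,p_3$ together with the distortion hypothesis to kill all average rotation. The key elementary fact is that every homogeneous quasimorphism vanishes on a distortion element: if $\phi$ is a homogeneous quasimorphism with defect $D$ on the group generated by $S$, then $\abs{\phi(h)}\le(\max_{s\in S}\abs{\phi(s)}+D)\,\abs{h}$, so $\abs{\phi(g)}=\abs{\phi(g^k)}/k$ is bounded by a constant times $\abs{g^k}/k\to 0$, whence $\phi(g)=0$. Regarding the $p_i$ as punctures, the complement $M=\s^2\setminus\{p_1,p_2,p_3\}$ is a hyperbolic surface on which $g$ acts, and the usual rotation and linking numbers of orbits (measured in the universal cover of $M$, or via the Gambaudo--Ghys-type construction on $\diff{1}{\s^2}$) define homogeneous quasimorphisms; the displayed fact then forces every such rotation number of $g$ to vanish.

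The main obstacle is the final step: converting the vanishing of these rotation numbers, together with the existence of the non-fixed recurrent point $z$, into a contradiction. I expect to argue by the Nielsen--Thurston trichotomy applied to the isotopy class of $g$ on $M$: property $(P_2)$, zero topological entropy for distortion elements in $\diff{1}{\s^2}$, excludes any pseudo-Anosov component, so $g|_M$ is isotopic to a finite-order map or to a reducible map whose pieces are all periodic. In that rigid situation the recurrent orbit of $z$, linking non-trivially with the three fixed points, should produce a transverse self-intersection of trajectories for a maximal isotopy of $g$, and then the forcing theory of Le Calvez and Tal (\cite{lecalvez-tal}) yields a topological horseshoe, hence positive topological entropy, contradicting $(P_2)$; alternatively, the vanishing rotation numbers confine the recurrent dynamics so strongly that $z$ must in fact lie in $\fix(g)$, against our choice. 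Making this dichotomy precise --- in particular, showing that a non-fixed recurrent orbit coexisting with three fixed points cannot have all rotation numbers zero without forcing a horseshoe --- is the heart of the argument and the step I expect to require the most care.
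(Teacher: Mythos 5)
The paper itself gives no proof of this statement --- it is imported verbatim from Franks--Handel, whose argument rests on the spread machinery recalled in Section 3 --- so your attempt must be judged against that mechanism. Your opening reductions are fine: $g=f^n$ is again distorted, and ergodic decomposition plus Poincar\'e recurrence correctly reduce the theorem to ruling out a recurrent point of $g$ outside $\fix(g)$. The genuine gap is your second step. The fact that homogeneous quasimorphisms vanish on distortion elements is true but carries no weight here: $g$ is only assumed distorted in $\diff{1}{\s^2}$, so the finite set $S$ consists of arbitrary diffeomorphisms, which need not preserve $\mu$ and need not fix the three points $p_1,p_2,p_3$. The Gambaudo--Ghys-type constructions require every element of the group to preserve an area or measure, and rotation/linking-number quasimorphisms are only defined on the subgroup fixing the punctures; neither applies to the group generated by $S$. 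Worse, the ambient groups in play admit no nontrivial homogeneous quasimorphisms at all: $\homeou{\s^2}$ and the identity component of the diffeomorphism group of $\s^2$ are uniformly perfect (Tsuboi; Burago--Ivanov--Polterovich; compare the distortion result of Calegari--Freedman quoted as Theorem \ref{theorem every homeo distortion}), so every homogeneous quasimorphism on them is identically zero and your displayed inequality is vacuous. Consequently you have no valid derivation that the rotation numbers of $g$ vanish. This is exactly the difficulty the spread is designed to overcome: the bound $L_{\beta,\gamma}(f^n(\alpha))\leq L_{\beta,\gamma}(\alpha)+C\abs{f^n}$ holds for an \emph{arbitrary} finite set of $C^1$ generators (the constant $C$ comes from Lipschitz bounds on the $g_i$, which is where differentiability enters), and together with distortion it yields zero spread (Proposition \ref{distortion imply spread 0}) and hence a unique rotation number for the annular lift (Proposition \ref{main proposition}), which is forced to be $0$ by normalizing the lift at a third fixed point.

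A second, smaller defect: your Nielsen--Thurston step is empty. The mapping class group of the thrice-punctured sphere is finite, so $g$ restricted to $\s^2\setminus\{p_1,p_2,p_3\}$ is automatically isotopic to a finite-order map, and excluding pseudo-Anosov pieces via zero entropy buys nothing. The step that actually converts ``zero rotation number plus a non-fixed recurrent point'' into a contradiction is Theorem \ref{theorem F lecalves tal} of Le Calvez--Tal, applied inside the fixed-point-free annulus provided by Theorem \ref{theorem G lecalves tal} with a fixed-point-free lift, producing a topological horseshoe and hence positive entropy against property $(P_2)$; this is how the present paper argues in the proof of Theorem A (and note that Franks--Handel, predating Le Calvez--Tal, close this step instead with Poincar\'e--Birkhoff-type results feeding back into positive spread). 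You gesture at precisely this endgame but defer it as the part ``requiring the most care,'' when with the spread input, and not the quasimorphism input, it is where the proof actually closes.
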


We state the following corollary which will be used in the rest of the article.

\begin{coro}\label{corollary franks handel theorem}
Let $f$ be a distortion element in $\diff{1}{\s^2}$. Suppose that $f$ has at least three fixed points, then
\begin{itemize}
  \item[(i)] the set of periodic points of $f$ coincides with that of its fixed points;
  \item[(ii)] if $X$ is a closed $f$-invariant set, then $X$ contains a fixed point; and
  \item[(iii)] if $h(f)$ denotes the topological entropy of $f$, then $h(f)=0$.
\end{itemize}
\end{coro}

\subsection{Homeomorphisms of the 2-sphere with no topological horseshoes}

In this section we state some results which were proved in \cite{lecalvez-tal}, where the authors study some properties of homeomorphisms of the 2-sphere with no topological horseshoes. A compact subset $Y$ of a manifold $M$ is a topological horseshoe if it is invariant by an iterated $f^q$ of $f$, $f^q\vert_Y$ is semi-conjugated to $\fonc{g}{Z}{Z}$ on a Hausdorff compact space, so that the fibers by the factor map are all finite with an uniform bound $m$ in their cardinality, and $g$ is semi-conjugated to the Bernoulli shift $\fonc{\sigma}{\{1,\cdots,N\}}{\{1,\cdots,N\}}$, where $N\geq 2$ is such that the preimage of every $s$-periodic sequence of $\{1,\cdots,N\}$ by the factor map contains a $s$-periodic point of $g$.

\begin{rema}\label{remark 1}
  Note that, if $h(f)$ denotes the topological entropy of $f$, then
 $$ qh(f)=h(f^q)\geq h(f^q\vert_Y)=h(g)\geq h(\sigma) =\log(N), $$
 and that $f^{rn}$ has at least $q^n/m$ fixed points for every $n\geq 1$.
\end{rema}

The first result is about rotation numbers for annular homeomorphisms with no topological horseshoes. We will write $\A= (\R/\Z)\times \R$ the open annulus
and denote by $\fonc{\pi}{\R^2}{\A}$ the universal covering projection of $\A$ and by $\fonc{\pi_1}{\R^2}{\R}$ the projection in the first coordinate.

\begin{theo}[Theorem A, \cite{lecalvez-tal}]\label{theorem A lecalves tal}
Let $f$ be a homeomorphism of $\A$ which is isotopic to the identity and let $\check{f}$ be a lift of $f$ to $\R^2$. We suppose that $f$ has no topological horseshoe. Then each point $z$ such that its $\omega$-limit set is non empty has a well defined rotation number $\rot_{\check{f}}(z)$, i.e. for every compact set $ K \subset \A$ and every increasing sequence of integers $(n_k)_{k\in\N}$ such that $z$ and $f^{n_k}(z)$ belong to $K$, we have
 $$  \lim_{k\to \infty} \frac{1}{n_k}(\pi_1(\check{f}^{n_k}(\check{z})-\check{z}  )  )= \rot_{\check{f}}(z) ,$$
where $\check{z}$ is a lift of $z$.
\end{theo}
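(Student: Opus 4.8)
The plan is to deduce the existence of $\rot_{\check f}(z)$ from the forcing theory for transverse trajectories developed in \cite{lecalvez-tal}. First I would fix a maximal identity isotopy $I=(f_t)_{t\in[0,1]}$ joining the identity to $f$; such an isotopy exists by the theory of maximal isotopies of B\'eguin--Crovisier--Le Roux, and one may arrange its lift to $\R^2$ to equal $\check f$ at time one. On the open set $\dom(I)=\A\setminus\fix(I)$, Le Calvez's equivariant foliated version of the Brouwer translation theorem provides a topological foliation $\mathcal F$ transverse to $I$: for every $z\in\dom(I)$ the trajectory $t\mapsto f_t(z)$ is homotopic, with fixed endpoints, to a path crossing the leaves of $\mathcal F$ transversally and always from the same side. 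Concatenating these pieces along the orbit of $z$ yields its whole transverse trajectory, a path in $\dom(I)$ that I will analyze combinatorially through the sequence of leaves it meets.

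The decisive step is the forcing dichotomy of \cite{lecalvez-tal}: if the transverse trajectory of some point admits a transverse self-intersection, then $f$ possesses a topological horseshoe in the sense defined above. Since by hypothesis $f$ has none, I would conclude that the transverse trajectory of $z$ has no transverse self-intersection, and likewise for the trajectory read in the universal cover $\R^2$ with respect to the lifted foliation $\check{\mathcal F}$. The absence of transverse self-intersections is a strong rigidity property: it forces the successive leaves crossed by the trajectory to be coherently ordered, so the path cannot ``turn back'' around the annulus.

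From this rigidity I would extract the rotation number as follows. Fix an essential leaf $\phi$ of $\mathcal F$ and lift it to a $\Z$-invariant family of pairwise disjoint properly embedded lines of $\R^2$. Up to a bounded error controlled by the diameter of the compact set $K$, the integer $\pi_1(\check f^{n}(\check z)-\check z)$ agrees with the algebraic intersection number of the transverse trajectory of $\check z$ between times $0$ and $n$ with these lifted leaves. Because the transverse trajectory has no transverse self-intersection, all of these crossings occur with the same sign, so the intersection count is monotone along the orbit and the displacement grows at a well-defined linear rate. Restricting to the sequence $(n_k)$ furnished by the nonempty $\omega$-limit set, so that $z$ and $f^{n_k}(z)$ return to $K$, I would conclude that $\tfrac{1}{n_k}\pi_1(\check f^{n_k}(\check z)-\check z)$ converges to a limit that is independent of $K$ and of the chosen sequence; this common limit is $\rot_{\check f}(z)$.

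The main obstacle is the forcing dichotomy itself, namely proving that a transverse self-intersection produces a genuine topological horseshoe (a finite-to-one semiconjugacy onto a Bernoulli shift in which the preimages of periodic sequences carry periodic points). This rests on the full machinery of \cite{lecalvez-tal}: building admissible transverse paths, showing that a transverse intersection can be iterated to realize arbitrarily long prescribed itineraries, and then assembling the compact invariant set that carries the shift dynamics. Once this is granted, converting the combinatorics of leaf-crossings into the quantitative limit is comparatively routine bookkeeping distinguishing essential from inessential leaves on the annulus.
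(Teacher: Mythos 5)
There is nothing to compare inside the paper itself: the statement is quoted verbatim as Theorem A of \cite{lecalvez-tal} and used as a black box, with no proof given. So your proposal can only be measured against the proof in that reference, whose overall architecture -- maximal identity isotopy, Le Calvez's equivariant transverse foliation, and the forcing dichotomy ``transverse self-intersection implies horseshoe'' -- your first two paragraphs do reproduce faithfully.

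The genuine gap is in your third paragraph, where you pass from ``no transverse self-intersection'' to convergence of $\frac{1}{n_k}\pi_1(\check f^{n_k}(\check z)-\check z)$. Coherence of crossing signs with a family of lifted leaves gives at best a monotone crossing count, and monotonicity of an integer sequence $(a_n)$ does not make $a_n/n$ converge: the entire content of the theorem is to exclude two distinct sublimits $\rho<\rho'$ along two different return sequences to compact sets, and sign-coherent crossings are perfectly compatible with an orbit that travels at average speed $\rho$ over some long stretches and $\rho'$ over others. In \cite{lecalvez-tal} this is precisely where the forcing machinery re-enters a second time: a discrepancy between rotation speeds along two subsequences of returns is used to manufacture an $\mathcal{F}$-transverse intersection between the lifted transverse trajectory of $\check z$ and one of its images under a deck transformation $T^k$, and it is this intersection (self-intersection in the annulus, in the extended sense allowing translates) that yields the horseshoe and the contradiction. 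Your sketch lifts the trajectory to $\R^2$ but never exploits intersections with translates, which is the actual mechanism. A secondary problem: your bookkeeping presumes an essential leaf of $\mathcal{F}$ whose lifts meter the horizontal displacement, but $\dom(I)$ need not carry any essential leaf (the leaves can all be inessential in $\A$, and $\fix(I)$ can disconnect the domain); in the reference, displacement is controlled through the transverse paths and loops themselves rather than through intersection numbers with one fixed leaf. In short, the step you dismiss as ``comparatively routine bookkeeping'' is the heart of the proof of this theorem, and as written your argument for it fails.
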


\begin{theo}[Theorem F, \cite{lecalvez-tal}]\label{theorem F lecalves tal}
  Let $f$ be a homeomorphism of $\A$ which is isotopic to the identity and let $\check{f}$ be a lift of f to $\R^2$. We suppose that $\check{f}$ is fixed point free and that there exists a positively recurrent point $z$ such that $\rot_{\check{f}}(z)$ is well defined and equal to $0$. Then $f$ has a topological horseshoe.
\end{theo}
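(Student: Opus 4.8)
The plan is to deploy the forcing theory for transverse trajectories of surface homeomorphisms isotopic to the identity, of which this statement is one of the intended applications. First I would fix a maximal isotopy $I$ of $f$ and, by Le Calvez's foliation theorem, a transverse singular foliation $\mathcal{F}$ of $\A$ associated to $I$, defined on the complement of the fixed-point set $\fix(I)$ of the isotopy. Lifting both $I$ and $\mathcal{F}$ to the universal cover $\R^2$ produces an isotopy whose time-one map is the given lift $\check{f}$, together with a lifted foliation $\check{\mathcal{F}}$. The hypothesis that $\check{f}$ is fixed point free translates, after arranging the isotopy compatibly, into the statement that the lifted isotopy has no singularity in $\R^2$, so that $\check{\mathcal{F}}$ is a genuine nonsingular foliation of the plane.

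To each point $w$ one attaches its transverse trajectory $I_{\mathcal{F}}(w)$, the transverse path obtained by concatenating the foliated images of the successive isotopy segments. Write $\gamma$ for the whole transverse trajectory of the recurrent point $z$ and $\check\gamma$ for a lift. Both hypotheses are fed in at this stage. Recurrence of $z$ gives times $n_k\to\infty$ with $f^{n_k}(z)\to z$, while the equality $\rot_{\check f}(z)=0$, via Theorem \ref{theorem A lecalves tal}, forces the horizontal displacement $\pi_1(\check f^{n_k}(\check z)-\check z)$ to grow sublinearly; passing to a further subsequence one may keep these displacements bounded. Thus $\check\gamma$ returns, along this subsequence, to a bounded neighbourhood of its starting point in $\R^2$ without net horizontal drift.

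The crux is a dichotomy supplied by the forcing theory: either $\gamma$ admits an $\mathcal{F}$-transverse self-intersection, or it does not. In the first case Le Calvez--Tal's horseshoe criterion applies directly, since a transverse self-intersection in the trajectory of a recurrent point yields, after iteration, two transverse paths crossing each other transversally infinitely often, which is exactly the combinatorial input producing a topological horseshoe for $f$. The real work is therefore to exclude the second case. Here I would argue by contradiction: assuming $\gamma$ has no $\mathcal{F}$-transverse self-intersection, the lifted trajectory $\check\gamma$ behaves like an essentially non-self-crossing path in the foliated plane, and the bounded, zero-drift returns obtained above let one close up a long subarc of $\check\gamma$ by a short arc and run the equivariant Brouwer/Poincar\'e--Birkhoff argument in $\check{\mathcal{F}}$. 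A transverse trajectory that returns near itself with bounded horizontal displacement and no transverse self-intersection must then encircle a singularity of $\check{\mathcal{F}}$, that is, a fixed point of $\check{f}$, contradicting fixed-point-freeness. Hence the first alternative holds and $f$ has a topological horseshoe.

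I expect the main obstacle to be precisely this last step, the passage from ``a recurrent point of rotation number zero whose transverse trajectory has no $\mathcal{F}$-transverse self-intersection'' to ``a fixed point of the chosen lift.'' It requires a careful equivariant analysis of the planar foliation $\check{\mathcal{F}}$ and of how the bounded-drift returns of $\check\gamma$ sit relative to its leaves, so that the absence of transverse self-intersections genuinely forces the trajectory to wind around a singularity rather than escape to infinity. The two hypotheses are consumed exactly here: the zero rotation number guarantees that the returns carry no horizontal translation that would let $\check\gamma$ drift off, and the fixed-point-freeness of $\check{f}$ supplies the contradiction that rules out the benign alternative.
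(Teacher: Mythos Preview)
The paper does not prove this statement at all: Theorem~\ref{theorem F lecalves tal} is quoted verbatim from \cite{lecalvez-tal} as one of several preliminary results, with no proof or proof sketch given. The section in which it appears is explicitly introduced with ``In this section we state some results which were proved in \cite{lecalvez-tal}''. So there is no ``paper's own proof'' to compare your proposal against.

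For what it is worth, your outline is a plausible high-level description of the forcing-theory machinery that Le~Calvez and Tal actually use in \cite{lecalvez-tal} to obtain this result: a maximal isotopy, a transverse foliation, the dichotomy on $\mathcal{F}$-transverse self-intersections of the transverse trajectory of the recurrent point, and the horseshoe criterion in the self-intersecting case. But verifying whether your sketch of the ``no self-intersection'' branch is correct would require going into the details of \cite{lecalvez-tal}, which is outside the scope of the present paper; here the theorem is simply used as a black box.
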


\begin{theo}[Theorem G, \cite{lecalvez-tal}]\label{theorem G lecalves tal}
  Let $f$ be an orientation-preserving homeomorphism of $\s^2$ that has no topological horseshoe. Then every non-wandering point $z$ whose $\alpha$-limit set or $\omega$-limit set in not included in the set of fixed points of $f$ lies in an open annulus $A$ which is a fixed point free and $f\vert_{A}$ is isotopic to the identity.

\end{theo}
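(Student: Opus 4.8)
The plan is to prove this through the forcing theory for transverse trajectories associated to a maximal isotopy, of which Theorems~\ref{theorem A lecalves tal} and \ref{theorem F lecalves tal} are themselves consequences. Since $f$ is an orientation-preserving homeomorphism of $\s^2$, it is isotopic to the identity, so I would first fix a maximal isotopy $I=(f_t)_{t\in[0,1]}$ from the identity to $f$ and let $\dom(I)=\s^2\setminus\fix(I)$ be its domain, where $\fix(I)\subset\fix(f)$ denotes the points fixed along the whole isotopy. By Le~Calvez's foliation theorem there is a singular oriented foliation $\mathcal F$ of $\s^2$ with singular set $\fix(I)$ that is positively transverse to $I$: every $w\in\dom(I)$ has a trajectory arc from $w$ to $f(w)$ crossing the leaves of $\mathcal F$ positively, and concatenating these arcs along the orbit of $w$ produces a whole transverse trajectory $\gamma_w$, a path on $\R$ positively transverse to $\mathcal F$.

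The key input is the forcing lemma of Le~Calvez--Tal: if the whole transverse trajectory of some point admits an $\mathcal F$-transverse self-intersection, then $f$ has a topological horseshoe. Since by hypothesis $f$ has none, I would conclude that \emph{no} whole transverse trajectory has an $\mathcal F$-transverse self-intersection. This is precisely the mechanism behind Theorem~\ref{theorem F lecalves tal}: there a transverse intersection is manufactured from a positively recurrent point of rotation number $0$, producing the forbidden horseshoe.

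Next I would focus on the given point $z$. Because $z$ is non-wandering and at least one of its limit sets is not contained in $\fix(f)\supseteq\fix(I)$, its orbit meets $\dom(I)$ and returns arbitrarily close to itself while staying, along a subsequence, in the regular part of $\mathcal F$; thus $\gamma_z$ is a recurrent transverse trajectory crossing a nontrivial family of leaves. The heart of the matter is structural: a recurrent whole transverse trajectory with no $\mathcal F$-transverse self-intersection cannot accumulate on itself through essential crossings, so the union $U$ of the closures of the leaves of $\mathcal F$ met by $\gamma_z$ is an open topological annulus around which $\gamma_z$ winds. I would then check that $U$ is $f$-invariant, since $f$ carries $\gamma_z$ to $\gamma_{f(z)}$, which is $\gamma_z$ up to the shift, so the swept-out family of leaves is preserved; that $U$ contains no fixed point, because its leaves are regular and hence avoid $\fix(I)$ while the rotation-number analysis of Theorem~\ref{theorem A lecalves tal} excludes the remaining fixed points of $f$ inside it; and that the restriction $\mathcal F|_U$ together with $I$ exhibits $f|_U$ as isotopic to the identity. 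Taking $A=U$ then yields the annulus containing $z$.

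The main obstacle is exactly this structural step: promoting the local, combinatorial fact ``no transverse self-intersection together with recurrence'' into the global conclusion that the leaves swept by $\gamma_z$ assemble into a genuine fixed-point-free invariant annulus on which $f$ is isotopic to the identity. This demands the full analysis of transverse trajectories without transverse self-intersection---their drawing of an essential loop, the comparison between distinct trajectories, and the control of the foliation at the two ends---which is the technical core of the Le~Calvez--Tal machinery, with the rotation-number statement of Theorem~\ref{theorem A lecalves tal} being what finally makes the resulting annular dynamics tractable.
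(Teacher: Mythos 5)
First, a point of order: the paper does not prove this statement at all --- it is quoted verbatim as Theorem G of Le Calvez--Tal \cite{lecalvez-tal} and used as a black box --- so your proposal can only be measured against that paper's argument, whose framework (maximal isotopy, Le Calvez's positively transverse singular foliation, and the forcing lemma ``an $\mathcal{F}$-transverse self-intersection of a whole transverse trajectory forces a horseshoe'') you have correctly identified. As a roadmap your sketch points at the right machinery. But as a proof it has genuine gaps, and you have in effect conceded the decisive one yourself: the entire passage from ``no transverse self-intersection plus some recurrence'' to ``an invariant, fixed-point-free open annulus containing $z$'' is asserted, not argued, and this is precisely the technical core of \cite{lecalvez-tal} (their analysis of trajectories with no transverse self-intersection, essential transverse loops, and squeezed annuli). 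A plan that names the hard step as ``the main obstacle'' has not crossed it.

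Beyond that, several specific claims in your construction would fail as stated. (1) Non-wandering does not imply recurrent: a non-wandering point need not return close to itself along its own orbit, so $\gamma_z$ need not be a recurrent transverse trajectory; Le Calvez--Tal must handle non-wandering points by approximating with nearby orbit segments, a nontrivial extra layer your sketch skips. (2) Your set $U$, the union of closures of leaves met by $\gamma_z$, is problematic twice over: closures of leaves may contain singularities of $\mathcal{F}$, i.e.\ points of $\fix(I)$, so $U$ need be neither open nor fixed point free; and nothing you say shows it is an annulus. (3) The invariance argument is incorrect: $f$ is transverse to $\mathcal{F}$, it does not preserve $\mathcal{F}$, so even though $z$ and $f(z)$ have the same whole transverse trajectory up to shift (hence meet the same leaves), $f(U)$ is not a union of leaves and there is no reason that $f(U)=U$. (4) Excluding from $A$ the fixed points of $f$ that are not in $\fix(I)$ by appeal to ``the rotation-number analysis of Theorem \ref{theorem A lecalves tal}'' is a gesture, not an argument --- in \cite{lecalvez-tal} this exclusion requires genuine work, including local dynamics at degenerate fixed points in the spirit of Le Roux. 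So the proposal chooses the correct theory but does not constitute a proof of the statement.
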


\section{Main Proposition}

In this section, we will prove the following proposition which is key in the demonstrations of Theorems A and B.

\begin{prop}\label{main proposition}
 Let $f$ be a distortion element in $\diff{1}{\s^2}$ and let $z_0$ and $z_1$ be two distinct fixed points of $f$. Let $f_{ann}$ be the homeomorphism of the closed annulus $\overline{A}$ obtained from $\s^2$ by blowing up both points $z_0$ and $z_1$. If $\check{f}_{ann}$ is a lift of $f_{ann}$ to the universal covering space of $\overline{A}$, then it has a unique rotation number, i.e. there exists $\rho$ such that

 $$  \lim_{n\to \infty} \frac{1}{n}(\pi_1(\check{f}_{ann}^{n}(\check{z})-\check{z}  )  )= \rho,$$
where $\check{z}$ is a lift of $z\in \overline{A}$.

\end{prop}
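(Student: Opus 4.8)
The plan is to prove that the \emph{rotation set} of $\check{f}_{ann}$ --- the set of all limit points of $\frac{1}{n}\pi_1(\check{f}_{ann}^{n}(\check{z})-\check{z})$ as $\check{z}$ and $n$ vary --- consists of a single point $\rho$, and then to deduce that the full limit exists and equals $\rho$ for every $\check{z}$. First I would record the basic structural facts. Since $f$ is a distortion element in $\diff{1}{\s^2}$, property $(P_2)$ gives $h(f)=0$, so by Remark \ref{remark 1} neither $f$ nor any power $f^{q}$ admits a topological horseshoe. The map $f_{ann}$ is a homeomorphism of $\overline{A}$ isotopic to the identity whose restriction to the open interior $A$ is conjugate to $f|_{\s^2\setminus\{z_0,z_1\}}$, so a horseshoe for $f_{ann}^{q}$ inside $A$ would descend to one for $f$; hence $f_{ann}^{q}$ has none, and by Theorem \ref{theorem A lecalves tal} every point of $A$ with nonempty $\omega$-limit set has a well-defined rotation number. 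Writing $\phi(z)=\pi_1(\check{f}_{ann}(\check{z})-\check{z})$, the set $\{\int\phi\,d\mu\}$, where $\mu$ ranges over the $f_{ann}$-invariant probability measures on $\overline{A}$, is a compact interval $[\rho_1,\rho_2]$ (the invariant measures form a compact connected convex set and $\mu\mapsto\int\phi\,d\mu$ is affine); by Birkhoff's theorem and Theorem \ref{theorem A lecalves tal} this interval coincides with the set of pointwise rotation numbers.

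The heart of the argument is to show $\rho_1=\rho_2$, and here I would split into two cases according to whether $f$ has a non-fixed periodic point. Suppose first that $f$ has a periodic point of period $q_0\ge 2$. Then $f^{q_0}$ is again a distortion element and has at least $q_0+2\ge 3$ fixed points, so Corollary \ref{corollary franks handel theorem}(i) applies to $f^{q_0}$: its periodic points coincide with its fixed points. If $q_0[\rho_1,\rho_2]$ were nondegenerate it would contain in its interior some rational $p/q$ in lowest terms with $q\ge 2$, and a realization result of Poincar\'e--Birkhoff--Franks type would produce an interior periodic point of $f_{ann}^{q_0}$ of period exactly $q$, that is, a non-fixed periodic point of $f^{q_0}$, contradicting Corollary \ref{corollary franks handel theorem}(i). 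Hence $q_0[\rho_1,\rho_2]$, and therefore $[\rho_1,\rho_2]$, is a single point.

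Suppose instead that $f$ has no non-fixed periodic point. Then for every $q$ the map $f_{ann}^{q}$ has no fixed point in the open interior $A$, so for every integer $p$ the lift $T^{-p}\check{f}_{ann}^{q}$ (with $T$ the unit deck translation) is fixed point free on the universal cover of $A$. If $[\rho_1,\rho_2]$ were nondegenerate, choose integers $p,q$ with $p/q\in(\rho_1,\rho_2)$, so that $p$ lies in the interior of the measure rotation interval $q[\rho_1,\rho_2]$ of $\check{f}_{ann}^{q}$. By convexity there is an $f_{ann}^{q}$-invariant measure of rotation number exactly $p$, and a generic point of an ergodic component of it is positively recurrent with $\rot_{\check{f}_{ann}^{q}}=p$, i.e.\ with $\rot_{T^{-p}\check{f}_{ann}^{q}}=0$. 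Since $T^{-p}\check{f}_{ann}^{q}$ is fixed point free, Theorem \ref{theorem F lecalves tal} yields a topological horseshoe for $f_{ann}^{q}$ inside $A$, hence for $f$, contradicting $h(f)=0$. Thus $[\rho_1,\rho_2]=\{\rho\}$ in this case too. Finally, once the rotation interval is the single point $\rho$, every limit point of $\frac{1}{n}\pi_1(\check{f}_{ann}^{n}(\check{z})-\check{z})$ lies in it, since such limit points are averages of $\phi$ against invariant measures; so the full sequence converges to $\rho$ for \emph{every} $\check{z}$, including the boundary circles, which is the asserted uniqueness.

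The main obstacle is the transfer, in the second paragraph, between the three incarnations of the rotation number --- averages against invariant measures, the pointwise limits governed by Theorem \ref{theorem A lecalves tal}, and the values realized by periodic or recurrent points --- and feeding the right configuration into the two realization theorems (Poincar\'e--Birkhoff--Franks in the first case, Theorem \ref{theorem F lecalves tal} in the second). In particular one must verify that an interior rational value of a nondegenerate rotation interval is attained by a \emph{positively recurrent} point whose twisted rotation number is $0$, and that the resulting horseshoe genuinely lives in the interior $A$ so that it obstructs the entropy of $f$. I would emphasize that the distortion hypothesis enters not only through $h(f)=0$ but crucially through Corollary \ref{corollary franks handel theorem} in the first case; this is precisely what excludes integrable-twist type maps, which have no horseshoe yet a nondegenerate rotation interval, and for which the conclusion would otherwise fail.
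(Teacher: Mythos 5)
Your convexity framework is set up correctly, and your final deduction is sound: since every limit point of the displacement averages $\frac{1}{n}\pi_1(\check{f}_{ann}^{n}(\check{z})-\check{z})$ is an integral of the displacement function against an invariant measure, a degenerate measure rotation interval does force convergence to $\rho$ for every point, boundary included. But both of your case arguments rest on realization steps that are false at this level of generality, so there is a genuine gap. Consider the map of $\overline{A}$ lifted by $\check{f}(x,y)=(x+y,\,y+\epsilon y(1-y))$ on $\R\times[0,1]$: it is isotopic to the identity, the boundary circles are pointwise fixed with lift rotation numbers $0$ and $1$, and every interior orbit spirals from one boundary to the other. This map has zero topological entropy, no topological horseshoe, and no non-fixed periodic point, yet its measure rotation interval is $[0,1]$. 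It shows three things. First, your claim that the measure interval ``coincides with the set of pointwise rotation numbers'' is wrong: here the pointwise rotation numbers are only $0$ and $1$, since ergodic measures live on the boundary circles. Second, in your Case A there is no Poincar\'e--Birkhoff--Franks theorem producing a periodic point of rotation number $p/q$ interior to the measure interval without area-preservation or chain transitivity, neither of which you have (in the example $1/2$ lies in the interior of the interval but is realized by nothing). Third, and fatally for Case B, an invariant measure of mean rotation number $p$ is in general a non-extreme convex combination and need \emph{not} have any ergodic component of rotation number $p$: in the example a measure of mean rotation $1/2$ decomposes into boundary measures of rotation numbers $0$ and $1$ only, so there is no positively recurrent point with twisted rotation number $0$ and Theorem \ref{theorem F lecalves tal} never applies. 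Since everything you actually invoke in Case B (zero entropy, absence of non-fixed periodic points, fixed point free twisted lifts) holds for this example, your argument would ``prove'' that it has a horseshoe; it cannot be repaired without a genuinely new input.

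That new input is precisely how the paper uses the distortion hypothesis, quantitatively rather than only through $h(f)=0$ and Corollary \ref{corollary franks handel theorem}: the Franks--Handel \emph{spread}. The word-length bound $L_{\beta,\gamma}(f^n(\alpha))\leq L_{\beta,\gamma}(\alpha)+C\abs{f^n}$ forces $\sigma_{f,\beta,\gamma}(\alpha)=0$ for a distortion element (Proposition \ref{distortion imply spread 0}), whereas two interior points with distinct rotation numbers $\rho<\rho'$ would make a curve $\alpha$ joining them satisfy $L_{\beta,\gamma}(f^n(\alpha))\geq(\rho'-\rho-2\epsilon)n-2$, hence have positive spread. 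Note that this argument kills the spiral example directly (such a map has positive spread, so it is undistorted), which is exactly the phenomenon your scheme cannot see. For the boundary rotation numbers the paper again avoids realization of interior rationals: it analyzes the local rotation set at each blown-up fixed point, using Le Roux's trichotomy when that set is empty and the results of \cite{conejeros} when it is not, in each case producing an interior point whose rotation number contradicts the spread-based first part. To close your gap you would have to graft the spread estimate (or an equivalent use of the word-length bound) into your argument; the measure-theoretic rotation set alone, constrained only by entropy and by Corollary \ref{corollary franks handel theorem}, does not suffice.
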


To prove this proposition we need the concept of spread introduced by Franks and Handel in \cite{franks-handel}.\\

\textit{Spread.-} Let $\gamma$ be a smooth curve with endpoints $z_0$ and $z_1$ (smooth at the endpoints) and $\beta$ be a simple nullhomotopic closed curve on $\s^1$ such that the point $z_1$ is contained in the disk bounded by $\beta$. For any curve $\alpha$, the spread $L_{\gamma,\beta}(\alpha)$ is going to measure how many times $\alpha$ rotates around $\beta$ with respect to $\gamma$. More formally the definition of spread is the following: let $\overline{A}$ be the compact annulus obtained from $\s^2$ by blowing up both points $z_0$ and $z_1$. In this case the universal covering space of $\overline{A}$ can be identified with $\R\times [0,1]$ and the covering translation can be identified by $T(x,y)=(x+1,y)$. For each lift $\check{\alpha}$ of $\alpha$ and $\check{\gamma}$ of $\gamma$ to $\R\times [0,1]$, there exist integers $a<b$ such that $\check{\alpha} \cap T^{i}(\check{\gamma})\neq \emptyset $ if and only if $a<i<b$. Define
 $$  \check{L}_{\check{\beta},\check{\gamma}}(\check{\alpha})= \max\{0, b-a-2\}.$$
and

 $$  L_{\beta,\gamma}(\alpha)= \max_{\check{\alpha}}\{\check{L}_{\check{\beta},\check{\gamma}}(\check{\alpha})\}.$$

Define the spread of $\alpha$ with respect to $f$, $\beta$ and $\gamma$ as

$$  \sigma_{f,\beta,\gamma}(\alpha):= \liminf_{n\to \infty} \frac{L_{\beta,\gamma}(f^n(\alpha))}{n}. $$

In \cite{franks-handel} the authors give some criteria for undistortion, one of them is positive spread.

\begin{leem}
  Let $g_i$, $1\leq i \leq k $ be a finite set of elements of $\diff{1}{\s^2}$. There exists a constant $C > 0$ such that the following property holds: If $f$ belongs to the group generated by the $g_i$ and if $\abs{f^n}$ is the word length of $f^n$ in the generators $g_i$, then for all curves $\alpha$, $\beta$, $\gamma$ and all integer $n>0$,
$$  L_{\beta,\gamma}(f^n(\alpha)) \leq L_{\beta,\gamma}(\alpha)+ C\abs{f^n}. $$
\end{leem}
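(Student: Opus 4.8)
The plan is to isolate a one-step estimate and then telescope along a word realizing $f^n$. I would first establish that there is a constant $c>0$, depending only on the finite set $\{g_i\}$ (and on the fixed data $\beta,\gamma$), such that for every generator $g\in\{g_i^{\pm 1}\}$ and every curve $\alpha$,
$$L_{\beta,\gamma}(g(\alpha))\le L_{\beta,\gamma}(\alpha)+c.$$
Granting this, if $f^n=w_m\cdots w_1$ is a geodesic word of length $m=\abs{f^n}$ in the $g_i^{\pm 1}$, then applying the one-step bound to the successive images $w_j\cdots w_1(\alpha)$ and summing the $m$ increments gives $L_{\beta,\gamma}(f^n(\alpha))\le L_{\beta,\gamma}(\alpha)+cm$, which is the claim with $C=c$. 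All of the content is thus concentrated in the uniform one-step estimate.

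For the one-step estimate I would convert $L_{\beta,\gamma}$ into a displacement in the universal cover. Writing the cover of $\overline{A}$ as $\R\times[0,1]$ with $T(x,y)=(x+1,y)$, the arcs $T^i(\check\gamma)$ are disjoint and separate the cover into consecutive fundamental strips; because they separate, a connected lift $\check\alpha$ meets precisely the $T^i(\check\gamma)$ with $a<i<b$, so that $L_{\beta,\gamma}(\alpha)=\max\{0,b-a-2\}$ agrees, up to a bounded additive error, with the horizontal extent of $\check\alpha$ measured in these strips. A generator $g$ induces a homeomorphism of the compact annulus $\overline{A}$, and a lift $\check g$ to $\R\times[0,1]$ commutes with $T$; by compactness of a fundamental domain, its horizontal displacement $D_g=\sup_{\check w}\abs{\pi_1(\check g(\check w))-\pi_1(\check w)}$ is finite. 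Since $\check g$ moves the first coordinate by at most $D_g$, it changes the horizontal extent of $\check\alpha$ by at most $2D_g$, whence $L_{\beta,\gamma}(g(\alpha))\le L_{\beta,\gamma}(\alpha)+2D_g+O(1)$; setting $c=\max_i 2D_{g_i}+O(1)$ completes the step.

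Equivalently, and a little more cleanly, I would use that horizontal displacement is subadditive under composition, $D_{\check g\check h}\le D_{\check g}+D_{\check h}$: the lift $\check f^n=\check w_1\cdots\check w_m$ then satisfies $D_{\check f^n}\le m\max_i D_{g_i}=\abs{f^n}\max_i D_{g_i}$, and the displacement-to-spread comparison is applied only once. Either way, the crucial feature is that the bound is governed by the number of letters rather than by $n$; this is precisely what makes it useful for a distortion element, since $\liminf\abs{f^n}/n=0$ then forces $\sigma_{f,\beta,\gamma}(\alpha)=\liminf_n L_{\beta,\gamma}(f^n(\alpha))/n=0$.

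I expect the main obstacle to be the uniform passage between the combinatorial quantity $L_{\beta,\gamma}$ and the analytic displacement $D_g$. Two issues must be handled. First, the comparison between $L_{\beta,\gamma}(\alpha)$ and the horizontal extent of $\check\alpha$ has to hold with an error independent of $\alpha$ and of the lift chosen in the maximum defining $L_{\beta,\gamma}$; this relies on the separation property of the arcs $T^i(\check\gamma)$ and on careful bookkeeping of the constant $-2$, with $\beta$ fixing the normalisation of $\check\gamma$. Second, and more delicate, the finiteness of $D_g$ requires that $g$ be compatible with the blow-up at the endpoints of $\gamma$, so that it descends to $\overline{A}$ and $\check g$ commutes with $T$; a curve winding many times around a point that $g$ sends onto a marked point would otherwise gain unbounded spread in a single step. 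This compatibility—available in the intended application because the two marked points are common fixed points—is the structural input on which the uniformity of $C$ ultimately rests.
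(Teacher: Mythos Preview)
The paper does not actually prove this lemma; it quotes it from Franks--Handel \cite{franks-handel} and moves on. So there is no in-paper argument to compare against, and your proposal must stand on its own.

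There is a genuine gap. Your one-step estimate (and its subadditive variant) rests on lifting each generator $g$ to $\check g$ on $\R\times[0,1]$ and bounding its horizontal displacement $D_g$. But the blow-up annulus $\overline{A}$ is built from the endpoints $z_0,z_1$ of $\gamma$, and the generators $g_i$ are \emph{arbitrary} elements of $\diff{1}{\s^2}$: they have no reason to fix $z_0$ or $z_1$, hence do not induce homeomorphisms of $\overline{A}$, and there is no lift $\check g$ commuting with $T$ to speak of. Your final paragraph identifies exactly this issue but then asserts that ``the two marked points are common fixed points'' in the intended application. That is not so: $z_0,z_1$ are fixed by the distortion element $f$, not by the generators $g_i$ of the ambient finitely generated group, and the whole force of the lemma is that $C$ depends only on the generating set. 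So the displacement argument, as written, does not get off the ground.

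What makes the Franks--Handel proof work is precisely the $C^1$ hypothesis, which your sketch never uses. The point is roughly that $L_{\beta,\gamma}(g(\alpha))$ can be compared to $L_{g^{-1}(\beta),g^{-1}(\gamma)}(\alpha)$, and then one has to bound the discrepancy between the reference arcs $\gamma$ and $g^{-1}(\gamma)$; smoothness of $\gamma$ at its endpoints together with differentiability of $g$ ensures that a lift of $g^{-1}(\gamma)$ can cross only boundedly many translates $T^i(\check\gamma)$ near each end, which gives the uniform additive constant. Without differentiability this fails (and indeed the lemma is false in $\homeou{\s^2}$, consistently with Theorem~\ref{theorem every homeo distortion}). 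If you want to repair your approach, you will need to replace the nonexistent lift $\check g$ by this comparison of reference systems and invoke the $C^1$ regularity explicitly.
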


It follows the following proposition.

\begin{prop}\label{distortion imply spread 0}
  Let $f$ be a distortion element in $\diff{1}{\s^2}$. Then for all curves $\alpha$, $\beta$, $\gamma$ we have
  $$ \sigma_{f,\beta,\gamma}(\alpha)=0.$$
\end{prop}
\begin{proof}
  Since $f$ is a distortion element in $\diff{1}{\s^2}$, there exists $g_i$, $1\leq i \leq k $ a finite set of elements of $\diff{1}{\s^2}$ such that $f$ belongs to the group generated by the $g_i$. Moreover if $\abs{f^n}$ is the word length of $f^n$ in the generators $g_i$,
  $$ \lim_{n\to\infty} \frac{\abs{f^n}}{n}=0.  $$

According to the definition of spread and above lemma we have
$$ \sigma_{f,\beta,\gamma}(\alpha)=\liminf_{n\to \infty} \frac{L_{\beta,\gamma}(f^n(\alpha))}{n} \leq \liminf_{n\to\infty} \frac{L_{\beta,\gamma}(\alpha)+ C\abs{f^n}}{n}=0.$$
\end{proof}

The following result is about rotation numbers for homeomorphisms of the closed annulus with no topological horseshoe.

\begin{leem}
  Let $f$ be a homeomorphism of $(\R/\Z)\times [0,1]$ which is isotopic to the identity and let $\check{f}$ be a lift of $f$ to $\R \times [0,1]$. We
suppose that $f$ has no topological horseshoe. Then each point $z \in (\R/\Z)\times [0,1]$ has a well defined rotation number
$$  \rot_{\check{f}}(z)= \lim_{n\to \infty} \frac{\pi_1(\check{f}^n(\check{z}))- \pi_1(\check{z})}{n},   $$
where $\check{z}$ is a lift to $z$.

\end{leem}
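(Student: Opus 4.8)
The plan is to deduce this closed-annulus statement from Theorem \ref{theorem A lecalves tal}, which is stated for the open annulus $\A=(\R/\Z)\times\R$. Write $\overline{A}=(\R/\Z)\times[0,1]\subset\A$ and let $C_0=(\R/\Z)\times\{0\}$ and $C_1=(\R/\Z)\times\{1\}$ be its two boundary circles. The idea is to extend $f$ to a homeomorphism $\tilde f$ of the whole open annulus $\A$ in such a way that $\overline{A}$ stays invariant, that $\tilde f$ inherits the absence of horseshoes, and that orbits of points of $\overline{A}$ never escape a fixed compact set; then Theorem \ref{theorem A lecalves tal}, together with compactness, will give the full limit for every point of $\overline{A}$.

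Since $f$ is isotopic to the identity it preserves each boundary circle, so $g_i:=f|_{C_i}$ is an orientation-preserving circle homeomorphism for $i=0,1$. I would extend $f$ to $\tilde f$ on $\A$ by setting, on the upper collar $(\R/\Z)\times[1,\infty)$, $\tilde f(x,y)=(g_1(x),\psi(y))$, where $\psi$ is an increasing homeomorphism of $[1,\infty)$ with $\psi(1)=1$ and $\psi(y)>y$ for all $y>1$ (for instance $\psi(y)=2y-1$); symmetrically, on the lower collar $(\R/\Z)\times(-\infty,0]$, $\tilde f(x,y)=(g_0(x),\chi(y))$ with $\chi$ an increasing homeomorphism of $(-\infty,0]$ fixing $0$ and satisfying $\chi(y)<y$ for $y<0$. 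These formulas agree with $f$ along $C_0$ and $C_1$, so $\tilde f$ is a genuine homeomorphism of $\A$; being assembled from $f$ (isotopic to the identity on $\overline{A}$), from the orientation-preserving circle maps $g_0,g_1$, and from the orientation-preserving radial maps $\chi,\psi$, it is isotopic to the identity and preserves the two ends. I would then fix the lift $\check{\tilde f}$ of $\tilde f$ that extends the given lift $\check f$.

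The crucial step, and the one I expect to be the main obstacle, is to check that $\tilde f$ has no topological horseshoe. The half-cylinders $(\R/\Z)\times[1,\infty)$ and $(\R/\Z)\times(-\infty,0]$ are $\tilde f$-invariant, and on them any point with $y>1$ (resp. $y<0$) satisfies $\psi^n(y)\to\infty$ (resp. $\chi^n(y)\to-\infty$), because $\psi$ has no fixed point in $(1,\infty)$ and $\chi$ none in $(-\infty,0)$. Consequently every compact $\tilde f$-invariant set must be contained in $\overline{A}$: a point of such a set lying off $\overline{A}$ would have an orbit escaping every compact set, contradicting compactness. In particular, if $Y$ were a topological horseshoe for $\tilde f$, that is a compact set invariant under some $\tilde f^q$, then $Y\subset\overline{A}$; since $\tilde f=f$ on $\overline{A}$, the set $Y$ together with the same factor maps would be a topological horseshoe for $f$, contrary to hypothesis. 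The whole point of making the collar maps fixed-point free on the open collars is precisely to force all recurrence back into $\overline{A}$, so that no new horseshoe can be created by the extension.

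Thus $\tilde f$ is a horseshoe-free homeomorphism of $\A$ isotopic to the identity, and Theorem \ref{theorem A lecalves tal} applies. For any $z\in\overline{A}$ the entire $\tilde f$-orbit stays in the compact set $\overline{A}$, so the $\omega$-limit set of $z$ is nonempty and $z$ has a well-defined rotation number in the sense of that theorem. Applying its conclusion with the compact set $K=\overline{A}$ and the full sequence $n_k=k$ (admissible since $z$ and $\tilde f^k(z)$ lie in $\overline{A}$ for every $k$) upgrades the convergence along return times to the genuine limit
$$ \lim_{n\to\infty}\frac{\pi_1(\check{\tilde f}^n(\check z))-\pi_1(\check z)}{n}=\rot_{\check{\tilde f}}(z). $$
Because the orbit of $z$ never leaves $\overline{A}$, where $\check{\tilde f}$ coincides with $\check f$, this limit is exactly the one in the statement, which proves the lemma; the only remaining verifications are the routine facts that $\tilde f$ is indeed a homeomorphism isotopic to the identity and that $\check{\tilde f}$ extends $\check f$.
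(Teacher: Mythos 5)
Your proof is correct and follows essentially the same route as the paper: extend $f$ to a homeomorphism of the open annulus $\A$ and apply Theorem \ref{theorem A lecalves tal} with $K=\overline{A}$ and $n_k=k$. The one difference is the collar extension: the paper uses $f^{\ast}(x,y)=(f_1(x,1),y)$ for $y\geq 1$ (identity in the second coordinate, so the collars are foliated by invariant circles, and ruling out horseshoes meeting the collars implicitly rests on the zero entropy of circle homeomorphisms), whereas your fixed-point-free vertical maps $\psi,\chi$ force every compact invariant set into $\overline{A}$, so any horseshoe of the extension is literally a horseshoe of $f$ --- a slightly cleaner justification of the step the paper dispatches with ``Note that $f^{\ast}$ has no horseshoe, because $f$ has no.''
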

\begin{proof}
   Write $f(x,y)=(f_1(x, y), f_2(x, y))$, we can extend $f$ to a homeomorphism $f^{\ast}$ of the open annulus $\A$ such that $f^{\ast}(x,y)=(f_1(x,1),y)$ if $y\geq 1$ and $f^{\ast}(x,y)=(f_1(x,0),y)$ if $y\leq 0$. Note that $f^{\ast}$ has no horseshoe, because $f$ has no. As $ (\R/\Z)\times [0,1]$ is included in a compact set of $\A$, the result follows from Theorem \ref{theorem A lecalves tal}.
\end{proof}

\subsection{Proof of Proposition \ref{main proposition}}

  In this section, we prove Proposition \ref{main proposition}. Suppose by contradiction that there exist $z$ and $z'$ (in the interior of $\overline{A}$) such that $\rot_{\check{f}_{ann}}(z)=\rho$ and $\rot_{\check{f}_{ann}}(z')=\rho'$, with $\rho <\rho'$. Conjugation we can suppose that $\check{f}_{ann}$ is a homeomorphism of $\R\times [0,1]$ and a lift of $\gamma$ to $\R\times [0,1]$ is $\{0\}\times [0,1]$. Let $\alpha$ be a smooth curve joining $z$ and $z'$ and let $\check{\alpha}$ be a lift of $\alpha$ with endpoints $\check{z}$ and $\check{z'}$. For $\epsilon < (\rho'-\rho)/2$ and for every $n$ large enought we have
$$ \pi_1(\check{f}^n_{ann}(\check{z})) < n\epsilon +n\rho + \pi_1(\check{z})  $$
and
$$ -n\epsilon +n\rho' + \pi_1(\check{z'}) <  \pi_1(\check{f}^n_{ann}(\check{z'}))  $$

Therefore we have

$$ -2\epsilon n +(\rho'-\rho)n <  \pi_1(\check{f}^n_{ann}(\check{z'}))-\pi_1(\check{f}^n_{ann}(\check{z})) \leq L_{\beta,\gamma}(f^n(\alpha))+ 2.   $$
And so
 $$ 0< \sigma_{f,\beta,\gamma}(\alpha).  $$
This contradicts Proposition \ref{distortion imply spread 0}.

Suppose now that there exists $z$ (in the interior $\overline{A}$) such that $\rot_{\check{f}_{ann}}(z)=\rho$ and that the rotation number of the boundary component associated to $z_0$ is $\rho_0$, with $\rho<\rho_0$. We have two cases.

Case 1. The local rotation set at $z_0$ is empty.
In this case following \cite{leroux} we have one of the next three possibilities
\begin{itemize}
  \item[(i)] $z_0$ is a global attractor,
  \item[(ii)] $z_0$ is a global repeller,
  \item[(iii)] $f$ is locally conjugate to the application $z \mapsto e^{2\pi i p/q} z (1+z^{qr})$, for some integers $p$, $q$ and $r$.
\end{itemize}

If possibility (i) or (iii) is valid, then there exists a point $z'$ (in the interior of $\overline{A}$) whose $\omega$-limit set is contained in the boundary component of $\overline{A}$ associated to $z_0$ and so $\rot_{\check{f}_{ann}}(z')=\rho_0$. This contradicts the first part. If possibility (ii) is valid for both boundary components of $\overline{A}$ the result follows by the first part.

Case 2. The local rotation set at $z_0$ is not empty. In this case following \cite{conejeros}, we know that the rotation set of the open annulus is an interval. Take a rational number written in an irreducible way $p/q$ such that $\rho<p/q<\rho_0$. We have again three possibilities (see \cite{conejeros}). Again, there exists a point $z'$ (in the interior of $\overline{A}$) such that $\rho< p/q \leq \rot_{\check{f}_{ann}}(z')$. This is again a contradiction with the first part.

This proves the result.




\section{Proof of main results}

\subsection{Proof of Theorem A}

In this section, we prove Theorem $A$. Let $f$ be a distortion element in $\diff{1}{\s^2}$. We will consider two cases:

\textbf{Case 1:} f has at least three fixed points.\\
Let $z$ be a (positively) recurrent point of $f$ which is not fixed. By Theorem \ref{theorem G lecalves tal} there exists an open annulus A which is maximal fixed point free and that contains $z$. As $f\vert_A$ is isotopic to the identity, by Corollary \ref{corollary franks handel theorem}, we can fix two fixed points $z_0$ and $z_1$ one in each connected component of the complement of $A$. Let $z_2$ be a third fixed point and let $\check{f}$ be a lift of $f\vert_{\s^2\setminus \{z_0,z_1\}}$ to the universal covering space of $\s^2\setminus \{z_0,z_1\}$ that fixes a lift of $z_2$. Then by Proposition \ref{main proposition} it has a unique rotation number which must be $\{0\}$. In this case the positively recurrent point $z$ in $A$ has a well defined rotation number $\rot_{\check{f}}(z)$ and it must be equal to 0. From Theorem \ref{theorem F lecalves tal}, $f\vert_A$ has a topological horseshoe. This contradicts Corollary \ref{corollary franks handel theorem} (see Remark \ref{remark 1}).\\

\textbf{Case 2:} $f$ has exactly two fixed points.\\
Let $z_0$ and $z_1$ be the two fixed points of $f$ and let $\check{f}$ be a lift of $f\vert_{\s^2\setminus \{z_0,z_1\}}$ to the universal covering space of $\s^2\setminus \{z_0,z_1\}$. Then as above it has a unique rotation number. If this is rational as above we can show that $f$ has a topological horseshoe which contradicts Corollary \ref{corollary franks handel theorem} (see Remark \ref{remark 1}). Hence the unique rotation number of $\check{f}$ is irrational and so $f$ is an irrational pseudo-rotation.

\subsection{Proof of Theorem B}

In this section, we prove Theorem $B$. Let $f$ be a distortion element in $\diff{1}{\s^2}$. Suppose that $f$ has at least three fixed points.
Fix us three distinct fixed points $z_0$, $z_1$ and $z$ of $f$. Let $f_{ann}$ be the homeomorphism of the closed annulus $\overline{A}$ obtained from $\s^2$ by blowing up the fixed points $z_0$ and $z$. If $\check{f}_{ann}$ is a lift of $f_{ann}$ to the universal covering space of $\overline{A}$ which fixes a lift of $z_2$, then by Proposition \ref{main proposition} it has a unique rotation number which must be $\{0\}$. Therefore the rotation number of both boundary components of $\overline{A}$ is zero. Hence $Df(z)$ has a unique eigenvalue which is $1$.


Departamento de Matem\'atica y Ciencia de la Computaci\'on,\\
Universidad de Santiago de Chile.\\
Avenida Libertador Bernardo O''Higgins 3363,\\
Estaci\'on Central, Santiago, Chile.\\
e-mail: jonathan.conejeros@usach.cl

\end{document}